\theoremstyle{plain}
\newtheorem{thm}{Theorem}[section]
\newtheorem{lem}[thm]{Lemma}
\newtheorem{cor}[thm]{Corollary}
\theoremstyle{definition}
\newtheorem{defn}{Definition}[section]
\newtheorem{exmp}{Example}[section]
\newtheorem{que}{Question}[section]
\newtheorem*{Ach*}{Acknowledgement}
\theoremstyle{remark}
\newtheorem{rem}{Remark}
\title{On Crossing Changes for Surface-Knots \footnote{Mathematics Subject Classification 2010: 57Q45 (57M25)}}
\author{A. Al Kharusi and T. Yashiro}
\date{}
\begin{document}
\maketitle
\begin{abstract}
In this paper, we discuss the crossing change operation along exchangeable double curves of a surface-knot diagram. 
We show that under certain condition, a finite sequence of Roseman moves preserves the property of those exchangeable double curves. As an application for this result, we also define a numerical invariant for a set of surface-knots called $du$-exchangeable set.
\end{abstract}
\noindent \textbf{Keywords.} 
surface-knot, invariant, crossing changes, Roseman moves.

\section{Introduction}
A \textit{surface-knot} $F$ is an orientable connected, closed surface smoothly embedded in the Euclidean $4$-space ${\mathbb R}^4$. In particular, it is called a \textit{2-knot} if it is homeomorphic to the standard 2-sphere.  We say that two surface-knots are \textit{equivalent} if and only if they are related by an ambient isotopy of $\mathbb{R}^4$. A surface-knot is called \textit{trivial} or \textit{unknotted} if it is isotopic to the boundary of a handlebody embedded in $\mathbb{R}^3 \times \{0\}$.\\
To describe a surface-knot $F$, we consider the image of the surface-knot under the orthogonal projection $p: \mathbb{R}^4 \rightarrow \mathbb{R}^3$ that is defined by $p(x_1,x_2,x_3,x_4)=(x_1,x_2,x_3)$. We may slightly perturb $F$ by an isotopy so that its projection image in $\mathbb{R}^3$ is a \textit{generic} surface \cite{generic}; that is, its singularity set consists of at most three types: double points, isolated triple points or isolated branch points. Double points form a disjoint union of 1-manifolds that may appear as open arcs or simple closed curves. We say that such an open arc is called a \textit{double edge}. Both triple points and branch points are in the boundary of double edges of a projection. The \textit{surface-knot diagram}, or simply the \textit{diagram} of a surface-knot $F$, denoted by $D$, is the generic projection image of $F$ in 3-space with crossing informations. The \textit{triple point number} of a surface-knot is defined to be the minimal number of triple points over all possible diagrams of the surface-knot. A diagram with minimal number of triple points is called a \textit{t-minimal} diagram. A surface-knot $F$ in $\mathbb{R}^4$ is a \textit{pseudo-ribbon} if it has a surface-knot diagram with singularity set consisting of only closed 1-manifolds. \\
The crossing change in classical knot theory; that is defined by exchanging an upper arc and a lower arc at a crossing point in a knot diagram, can be generalized to theory of surface-knots. When the crossing change operation is defined, it is natural to ask the following question.
\begin{que}\mark{1.1}
Let $p(F)$ be the projection of a surface-knot $F$. Is there an unknotted surface-knot $F_0$ such that $p(F)=p(F_0)$?
\end{que}
Note that for any classical knot diagram, there exist crossings such that crossing changes at the crossings change the diagram into a diagram presenting an unknot. This fact plays an essential role to compute the invariants such as the invariants defined by skein relations, (see \cite{Homfley} for example). Moreover, the unknotting number is defined. For surface-knot diagrams, the problem has not been settled yet. In the literature on crossing changes for surface-knots, we have the following partial results. S. Kamada proved in \cite{Kamada} that a special surface braid diagram can be unknotted by crossing changes. K. Tanaka in \cite{Tanaka} showed that any pseudo-ribbon diagram can be deformed by crossing changes into a surface-knot diagram with knot group isomorphic to $\mathbb{Z}$. In this paper, we will prove that if a surface-knot $F$ has a surface-knot diagram $D$ such that crossing changes along an exchangeable union of double curves of $D$ preserve all descendants disks of $D$ and deform it into a trivial surface-knot, then there exists a sequence of surface-knot diagrams of $F$,  each of which can be unknotted by the crossing changes operation indeed.     \\
The rest of the paper is organized as follows. In section 2, we give some basics about surface-knots. In section 3,  we define a finite sequence for a surface-knot represented by its surface-knot diagrams, called a t-descendent sequence. In section 4, we review the crossing change operation and section 5 is devoted to stating and proving the main result. Finally, section 6 introduces an invariant for a set of surface-knots called $du$-exchangeable set. 


\section{Preliminaries}

\subsection{Double decker sets of surface-knot diagrams}
Let $h:\mathbb{R}^4 \rightarrow \mathbb{R}$ be the height function, $h(x_1,x_2,x_3,x_4)=x_4$. Let $\text{cl}(S)$ stand for the closure of the set $S$.\\
The closure of the set
\[
\{x \in F : \# p^{-1}(p(x))\geq 2 \}
\]
can be regarded as the image of compact 1-dimensional manifold immersed into $F$. It is divided into two families $\mathcal{S}_a=\{s_a^1,\dotso,s_a^n\}$ and $\mathcal{S}_b=\{s_b^1,\dotso,s_b^n\}$ of immersed closed intervals or simple closed curves in $F$ such that $h(x)>h(x')$ holds for any $x \in s_a^i$ and $x' \in s_b^i$ $(i=1,2,\dotso,n)$. Let $S_a=\cup_{i=1}^n\text{cl}(s_a^i)$ and let $S_b=\cup_{i=1}^n\text{cl}(s_b^i)$. The union $S_a \cup S_b$ is called the \textit{double decker set} (see \cite{generic} and \cite{lift} for details).
\subsection{Double curves of surface-knot diagrams}
Let $D$ be a surface-knot diagram of a surface-knot $F$. Let $e_1,\dotso,e_n,e_{n+1}=e_1$ be double edges and let $T_1,\dotso,T_n,T_{n+1}=T_1$ be triple points of $D$. For $i=1,\dotso,n$, assume that the boundary points of each of $e_i$ are $T_{i}$ and $T_{i+1}$ and that $e_i$ and $e_{i+1}$ are in opposition to each other at $T_{i+1}$. The closure of the union  $e_1 \cup e_2 \cup \dotso \cup e_n$ forms a circle component called a \textit{closed double curve} of the diagram. We include double point circles in the set of closed double curves.\\
Similarly, let $e_1,\dotso,e_n$ be double edges, $T_1,\dotso,T_{n-1}$ be triple points of $D$ and suppose $b_1$ and $b_n$ are branch points of $D$. Assume the boundary points of $e_1$ are the triple point $T_1$ and the branch point $b_1$. Assume also that the double edge $e_{n}$ is bounded by $T_{n-1}$ and $b_n$. For $i=2,\dotso,n-1$, the double edge $e_i$ is bounded by $T_{i-1}$ and $T_i$. If $e_i$ and $e_{i+1}$ are in opposition to each other at $T_i$ $(1,2,\dotso,n-1)$, then the closure of the union  $e_1 \cup e_2 \cup \dotso \cup e_n$ forms an arc component called an \textit{open double curve} of the diagram. We include double point open intervals in the set of open double curves. By a \textit{double curve}, we refer to an open double curve or a closed double curve. 

\subsection{Type of branches at triple points}
Let $T$ be a triple point of a surface-knot diagram $D$ and $B(T)$ a 3-ball neighbourhood of $T$. The intersection of $B(T)$ and the double edges consists of six short arcs. We call them the \textit{branches} of double edges at $T$. A branch at $T$ is called a \textit{b/m-, b/t- or m/t-branch} if it is the intersection between bottom and middle or bottom and top or middle and top sheets, respectively.

\subsection{Roseman moves}
D. Roseman introduced analogues of the Reidemeister moves as local moves to surface-knot diagrams. Let $D$ and $D'$ be surface-knot diagrams of $F$ and $F'$, respectively. It is known that $F$ and $F'$ are equivalent if and only if there exists a finite sequence of surface-knot diagrams $D=D_0\rightarrow D_1\rightarrow \dotso \rightarrow D_n=D'$ such that for all $i=0,1,\dotso,n-1,$ $D_i$ and $D_{i+1}$ differ by one of seven Roseman moves \cite{Roseman}. We will write $D \sim D'$ to indicate that $D$ and $D'$ present the same surface-knot. T. Yashiro \cite{Yashiro1} showed that the Roseman's seven moves can be described by six moves depicted in Figure 1 (see also \cite{kaw}). We call these moves also Roseman moves. Each move from left to right is denoted by $R$-$X^+$ and right to left by $R$-$X^-$ except $R$-6. The move $R$-$X^-$ is called the \textit{reverse} of $R$-$X^+$. Note that the information on height has not been specified in Figure 1.
\begin{figure}[H]
\centering
\captionsetup{font=scriptsize}       
   \mbox{\includegraphics[scale=0.55]{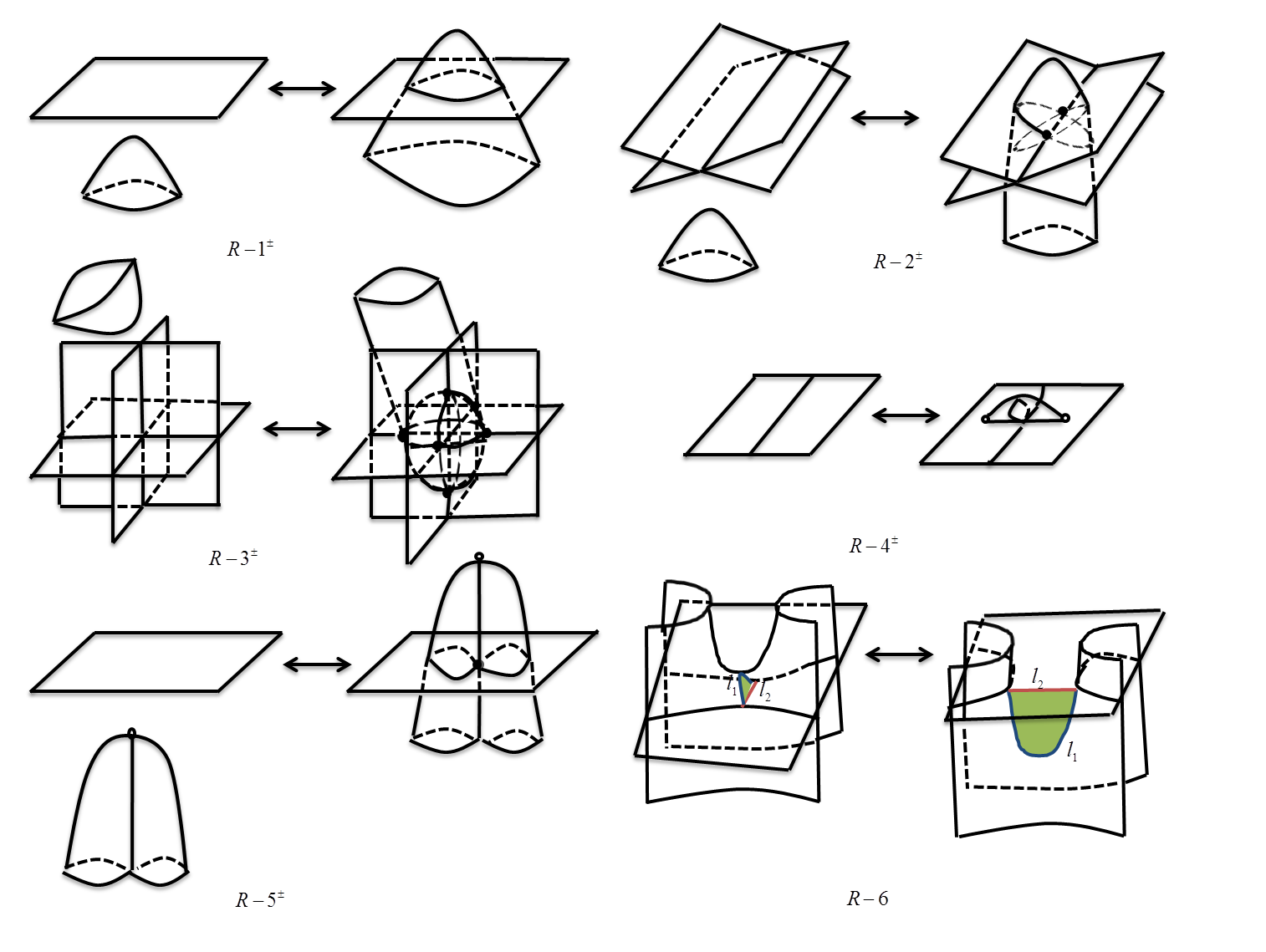}}
  \caption{}
   \end{figure}

Here we describe the $R$-$6$ move. Type $R$-$6$ move consists of deformations of two disks $P_1$ and $P_2$ in a surface-knot diagram $D$. The disk $P_1$ forms a saddle point and the other disk $P_2$ passing through the saddle point of $P_1$ in a direction perpendicular to the tangent plane at the saddle point. The two disks $P_1$ and $P_2$ intersect at two double edges, say $e_1$ from $a_1$ to $a_2$ and $e_2$ from $b_1$ to $b_2$, where $a_i$ and $b_i$ $(i=1,2)$ are boundary points of $P_2$. We give an order to these boundary points such that the four points are ordered as $\{a_1,a_2,b_1,b_2\}$ with respect to the orientation of the boundary. As $P_2$ passes through the saddle point of $P_1$, the two double edges $e_1$ and $e_2$ get closer and join at the middle point of each double edge. As a result, the new double edges $e'_1$ from $a_1$ to $b_2$ and $e'_2$ from $a_2$ to $b_1$ appear. \\

Assume that the $R$-$6$ move is applied to the pair of disks $P_1$ and $P_2$. In the notation above, let $a$ be the middle point of the double segment $e_1$ and let $b$ be the middle point of the double segment $e_2$. Then we can find a disk $P$, in a closure of one of complementary open regions of $D$, satisfying the following properties:
\begin{itemize}
\item[(1)] The interior of $P$ has empty intersection with the generic surface;
\item[(2)] $\partial P=l_1 \cup l_2$, where $l_1$ and $l_2$ are two simple arcs in $D$, each of which is terminated by $a$ and $b$;
\item[(3)] One of $l_i$ $(i=1,2)$ is on $P_1$ and the other one is on $P_2$;
\item[(4)] The pre-images of $l_i$ $(i=1,2)$ do not meet with $S_a \cup S_b$ other than their end points.
\item[(5)] The two endpoints of one of the pre-images $l_i$ $(i=1,2)$ are on $S_a$ and both endpoints of the other one are on $S_b$.

\end{itemize}
The disk $P$ is called a \textit{descendent disk} of $D$ (see the $R$-$6$ move in Figure 1) \cite{Homma}. Conversely, if a descendent disk exists, then $R$-$6$ can be applied. The two descendent disks involved in the left and right of $R$-$6$ are said to be \textit{dual to each other}. 
\section{$t$-descendent sequences}
Let $D=D_0\rightarrow D_1\rightarrow \dotso \rightarrow D_n=D'$ be a finite sequence of surface-knot diagrams satisfying the following condition:
\begin{itemize}
\item[(*)] A transition from $D_i$ to $D_{i+1}$ $(i=0,\dotso,n-1)$ is done by one of Roseman moves which can be realized by an isotopy of the surface-knot in $\mathbb{R}^4$ without creating a triple point in the projection.
\end{itemize}
In other words, the above condition says that the finite sequence of Roseman moves that connects $D$ and $D'$ does not include the Roseman moves $R$-$i^+$ $(i=2,3,5)$. Note that the reverse moves of these prohibited moves are allowed.
\begin{defn}
A finite sequence of surface-knot diagrams satisfying the condition (*) is called a \textit{t-descendent sequence}. 
\end{defn}
The terminology "t-descendent" is used to clarify that the number of triple points will not increase under the sequence. 
\begin{rem}
We point out here that M. Jabłonowski published a paper \cite{p-} in which he provided an example of two equivalent pseudo ribbon diagrams which can not be connected by a t-descendent sequence. 
\end{rem}
\section{The crossing change operation}

A crossing change operation is a local operation for a diagram of a surface-knot which has a natural analogy to the crossing changes of classical knots.

\begin{defn}
Let $D$ be a surface-knot diagram of a surface-knot. Let $\Gamma= \cup_{j=1}^r \gamma_j$ be a union of double curves in $D$. $\Gamma$ is \textit{exchangeable} if a surface-knot diagram is obtained from $D$ by changing the upper/lower information along the double curves of $\Gamma$ simultaneously. This operation is called the \textit{crossing change operation} along $\Gamma$, and we denote by $D(\Gamma)$ the surface-knot diagram obtained from $D$ by the operation. 
\end{defn}
We do not assume that $D$ and $D(\Gamma)$ present distinct surface-knots.\\

Let $T$ be a triple point of $D$. The figure below shows all possible cases of changing the crossing information around the triple point $T$, where the branches at $T$ contained in $\Gamma$ are bold lines for each possible case.
\begin{figure}[H]
\centering
\captionsetup{font=scriptsize}       
   \mbox{\includegraphics[scale=0.6]{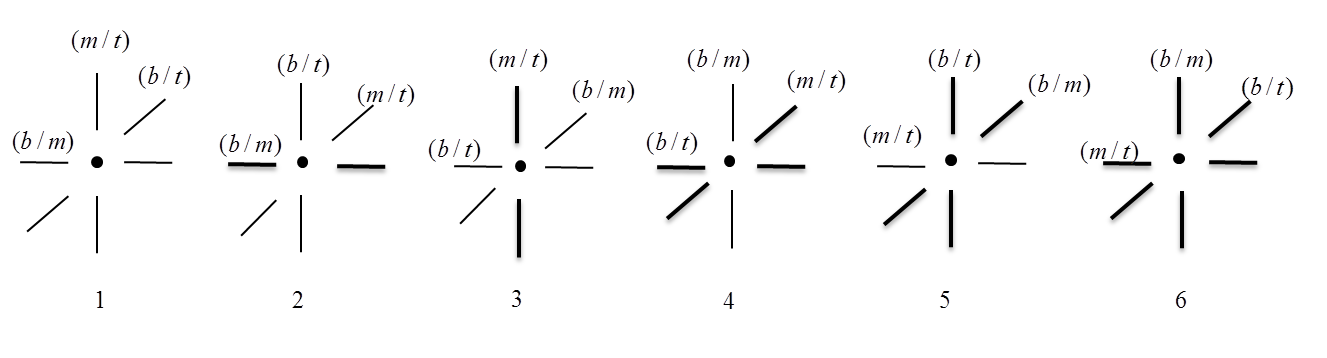}}
  \caption{}
   \end{figure}
\begin{rem}
 Let $D$ be a surface-knot diagram and $\gamma$ be a double curve in $D$. Assume $\gamma \subset \Gamma$ where $\Gamma$ is exchangeable. Suppose $\gamma$ has an edge which contains a $b/t$-branch at a triple point. Then, a $b/m$-branches or $m/t$-branches at $T$ are subsets of a double curve in $D$ that is contained in $\Gamma$.
\end{rem}
\begin{defn}
Let $\Gamma$ be an exchangeable union of double curves of a surface-knot diagram $D$. 
We say that $\Gamma$ satisfies the \textit{descendent disk condition} for $D$ if exchanging the crossing information along the double curves of $\Gamma$ preserves all descendent disks of $D$. We say that $\Gamma$ satisfies the \textit{unknotting condition} for $D$ if the surface-knot diagram $D(\Gamma)$ presents an unknotted surface-knot.
\end{defn}
\begin{defn}
A surface-knot diagram $D$ is called \textit{$du$-exchangeable} if it presents an unknotted surface-knot or it has an exchangeable union of double curves $\Gamma$ satisfying both the descendent disk condition and the unknotting condition for $D$.
\end{defn}
\begin{defn}
A surface-knot $F$ is \textit{$du$-exchangeable} if there is a surface-knot diagram $D$ presenting $F$ such that $D$ is $du$-exchangeable.
\end{defn}
\begin{rem}
It is not difficult to see that any surface-knot diagram of a surface-knot has exchangeable double curves satisfying the descendent disk condition. For example, the union of all double curves of a surface-knot diagram is exchangeable and it satisfies the descendent disk condition. 
\end{rem}
\section{The main result}
In this section we prove the main theorem in this paper (Theorem 5.4). In particular, the proof is divided into three lemmas.\\ 
Let $D$ be a surface-knot diagram of a surface-knot $F$. Let $\Gamma=\cup_{j=1}^r \gamma_j$ be an exchangeable union of double curves in $D$. The surface-knot diagram obtained after cross-change operation along $\Gamma$ applied is denoted by $D(\Gamma)$. Throughout the proof of the three lemmas in this section, $\Gamma^{(s)}$ denotes the union of double curves $\gamma_{1} \cup \gamma_{2} \cup \dotso \cup \gamma_{(s-1)} \cup \hat{\gamma}_{s} \cup \gamma_{(s+1)} \cup \dotso \cup \gamma_{r}$ that is obtained from $\Gamma$ by deleting the double curve $\gamma_{s}$. Similarly, we define $\Gamma^{(s,w)}$ to be the union of double curves that is obtained from $\Gamma$ by deleting the double curves $\gamma_s$ and $\gamma_w$. 
\begin{lem}\label{1}
Suppose that $D$ is transformed into $D'$ by one of the Roseman moves of $R$-$i^-$ $(i=2,3,5)$.   
For any exchangeable union $\Gamma$ of double curves of $D$,   
there is an exchangeable union $\Gamma'$ of double curves of $D'$ such that 
$D(\Gamma) \sim D'(\Gamma')$.  Moreover, if $\Gamma$ satisfies the descendent disk condition, then 
we may assume that $\Gamma'$ also satisfies it. 
\end{lem}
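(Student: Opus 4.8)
The plan is to analyze what each of the reverse moves $R$-$2^-$, $R$-$3^-$, $R$-$5^-$ does to the double curves of $D$, and to define $\Gamma'$ by tracking which double curves (or portions of double curves) of $D$ survive the move. Since each of these moves is the reverse of a move that \emph{creates} a triple point, the effect on $D$ is to \emph{remove} local structure: $R$-$2^-$ removes a pair of double edges (a cancelling bigon-type pair of sheets), $R$-$3^-$ removes a triple point together with the reconnection of three double edges near it, and $R$-$5^-$ removes a branch point together with a short double edge emanating from it. In each case the diagram $D'$ is obtained from $D$ by deleting a small neighbourhood of the move region and regluing; outside that neighbourhood $D$ and $D'$ are literally identical. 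So I would set $\Gamma'$ to be the union of the double curves of $D'$ that, outside the move region, coincide with double curves lying in $\Gamma$. Two routine verifications are then needed: first that $\Gamma'$ is exchangeable for $D'$, and second that $D(\Gamma)\sim D'(\Gamma')$.

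For exchangeability, the key point is that the crossing-change operation is purely combinatorial: $\Gamma'$ is exchangeable provided the upper/lower reassignment along its curves is globally consistent at every triple point of $D'$ (Remark after Definition 4.1 and Figure 2 list the admissible local patterns). Every triple point of $D'$ is a triple point of $D$ (none of $R$-$2^-$, $R$-$3^-$, $R$-$5^-$ creates triple points, and $R$-$3^-$ destroys exactly one), so the local consistency of $\Gamma'$ at a triple point $T$ of $D'$ is inherited from the local consistency of $\Gamma$ at the same $T$ in $D$; one only has to check the finitely many local pictures in the move region, using Remark 4.3 to handle the interaction of $b/t$-, $b/m$-, and $m/t$-branches. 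For the equivalence $D(\Gamma)\sim D'(\Gamma')$, note that applying the crossing change to $D$ and then deforming, versus deforming first and then applying the crossing change, differ only inside the move region: the local move relating $D$ and $D'$ can be performed on the changed diagrams as well, because a Roseman move is available whenever its local picture is present regardless of height data (for $R$-$6$ this is exactly the descendent-disk criterion recalled in Section 2.4; for the others it is immediate). So $D(\Gamma)\to D'(\Gamma')$ is again realized by the same type of move, giving $D(\Gamma)\sim D'(\Gamma')$.

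Finally, for the descendent-disk clause: a descendent disk $P$ of $D'$ sits in the closure of a complementary region of $D'$ with interior disjoint from the surface, and its boundary arcs $l_1,l_2$ lie on two sheets with prescribed behaviour relative to $S_a\cup S_b$. If $P$ lies away from the move region it is also a descendent disk of $D$, so its preservation under the crossing change along $\Gamma'$ follows from the hypothesis that $\Gamma$ satisfies the descendent disk condition for $D$; if $P$ meets the move region one checks directly, from the finite list of local pictures, that it is either carried to a descendent disk of $D$ or can be pushed off the region, reducing again to the hypothesis on $\Gamma$. I expect the main obstacle to be precisely this bookkeeping at $R$-$3^-$: a triple point is destroyed, three double edges are reconnected, and one must make sure that the reconnected edges of $D'$ inherit a coherent membership in $\Gamma'$ and that no descendent disk is broken by the reconnection — this is where Remark 4.3 (forcing $b/m$- or $m/t$-branches at a triple point whose edge carries a $b/t$-branch into $\Gamma$) does the real work, and the other two moves are comparatively straightforward.
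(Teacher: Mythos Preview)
Your overall strategy---track which double curves of $D$ survive the reverse move and let $\Gamma'$ be the corresponding union in $D'$, then observe that the same Roseman move takes $D(\Gamma)$ to $D'(\Gamma')$---is exactly the paper's approach. The paper carries this out by a case analysis, writing down $\Gamma'$ explicitly in each case (e.g.\ $\Gamma' = \Gamma^{(s)}\cup\gamma'_s$ when the affected curve $\gamma_s$ lies in $\Gamma$).

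Two points where your proposal diverges from the paper are worth flagging. First, your descriptions of the local moves do not match the paper's six-move calculus (Yashiro's version, Figure~1). In particular you treat $R$-$3^-$ as destroying a single triple point with three reconnected edges, and worry about coherence of the reconnection via Remark~4.3; in the paper's $R$-$3^-$ a paraboloid is pulled off a triple point of three intersecting disks, deleting \emph{six} triple points and three closed double curves, with no reconnection at all---the surviving curves $\gamma_s,\gamma_w,\gamma_k$ simply lose two edges each. Similarly $R$-$2^-$ deletes two triple points and $R$-$5^-$ deletes one. So the ``main obstacle'' you anticipate does not arise, and Remark~4.3 is not actually invoked here.

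Second, your argument for the descendent-disk clause is more elaborate than necessary. The paper dispatches it in one line: none of $R$-$i^-$ $(i=2,3,5)$ creates new double edges, hence every descendent disk of $D'$ is already a descendent disk of $D$, and preservation under $\Gamma'$ is inherited from preservation under $\Gamma$. There is no need to push disks off the move region or examine local pictures.
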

\begin{proof}
Let $\Gamma=\cup_{j=1}^r \gamma_j$ be an exchangeable union of double curves in $D$ satisfying the descendent disk condition for $D$. We need to define an exchangeable union of double curves $\Gamma'$ in $D'$ that satisfies the assertion of the lemma for each Roseman move $R$-$i^-$ $(i=2,3,5)$. \\
The Roseman move $R$-$2^-$ can be viewed that it takes the paraboloid away from the double edge so that the paraboloid does not meet with the two intersecting disks (see $R$-$2^\pm$ in Figure 1). As a result, two triple points and two closed double curves with these two triple points in each are cancelled. Let the branches of the eliminated triple points of $D$ that are the intersection of the two disks be subsets of $\gamma_s$, where $\gamma_s$ is a double curve of $D$. By applying $R$-$2^-$, $\gamma_s$ is restricted to a double curve $\gamma'_s$ of $D'$ that has less double edges by two. The exchangeable union of double curves $\Gamma'$ of $D'$ is defined as follows. 

\[
 \Gamma' =
  \begin{cases} 
      \Gamma    \hfill & \text{ if $\gamma_s \nsubseteq \Gamma$} \\
    \Gamma^{(s)}  \cup  \gamma'_{s} & \text{ if $\gamma_s \subset \Gamma$} \\
  \end{cases}
\]
We see that the diagram $D'(\Gamma')$ differs from $D(\Gamma)$ by the move $R$-$2^-$ and thus they are equivalent.\\

The move $R$-$3^-$ can be viewed that it takes the paraboloid away from the triple point $T$ so that it does not meet with the three intersecting disks  (see $R$-$3^\pm$ in Figure 1). This leads to elimination of six triple points and three closed double curves. Assume that the $b/m$- branches at $T$ are subsets of the double curve $\gamma_s$ of $D$, the $m/t$- branches at $T$ are subsets of the double curve $\gamma_w$ of $D$ and the $b/t$- branches at $T$ are subsets of the double curve $\gamma_k$ of $D$. By applying $R$-$3^-$, $\gamma_s$ is transformed to a double curve $\gamma_s'$ of $D'$ such that $\gamma_s'$ has less double edges than $\gamma_s$ by two. Similarly,  $\gamma_w$ and $\gamma_k$ of $D$ are transformed to $\gamma_w'$ and $\gamma_k'$ in $D'$, respectively. We define $\Gamma'$ by
\[
 \Gamma' =
  \begin{cases} 
       \Gamma    \hfill & \text{ if $\gamma_s, \gamma_w,\gamma_k \nsubseteq \Gamma$} \\
      \Gamma^{(s)}  \cup  \gamma'_{s} & \text{ if $\gamma_s \subset \Gamma$ only} \\
        \Gamma^{(w)}  \cup  \gamma'_{w} & \text{ if $\gamma_w \subset \Gamma$ only} \\
         \Gamma^{(k,s)}  \cup  \gamma'_{s}  \cup \gamma'_{k}& \text{ if $\gamma_s$ and $\gamma_k \subset \Gamma$ only} \\
         \Gamma^{(k,w)}  \cup  \gamma'_{w}  \cup \gamma'_{k}& \text{ if $\gamma_w$ and $\gamma_k \subset \Gamma$ only} \\
         \Gamma^{(k,s,w)}  \cup  \gamma'_{s} \cup  \gamma'_{w}   \cup \gamma'_{k}& \text{ if $\gamma_s, \gamma_w,\gamma_k \subset \Gamma$} \\
           
  \end{cases}
\]
From the definition of $\Gamma'$, we obtain that the diagram $D'(\Gamma')$ differs from $D(\Gamma)$ by the move $R$-$3^-$. It follows that they are equivalent.\\
The move $R$-$5^-$ can be described as moving a disk with a branch point away from the second disk  (see $R$-$5^\pm$ in Figure 1) and thus a triple point $T$ is cancelled. Let the branch of $T$ whose other endpoint is a branch point be a subset of a double curve $\gamma_s$ of $D$. By applying $R$-$5^-$ to $D$, $\gamma_s$ is restricted to the double curve $\gamma'_s$ of $D'$ that has less double edges by one. Define
\[
 \Gamma' =
  \begin{cases} 
      \Gamma    \hfill & \text{ if $\gamma_s \nsubseteq \Gamma$} \\
    \Gamma^{(s)}  \cup  \gamma'_{s} & \text{ if $\gamma_s \subset \Gamma$} \\
  \end{cases}
\]
It follows that the transition from $D(\Gamma)$ to $D'(\Gamma')$ is done by $R$-$5^-$. We obtain that $D(\Gamma) \sim D'(\Gamma')$.\\
It remains to prove that $\Gamma'$ defined above for each Roseman move $R$-$i^-$ $(i=2,3,5)$ satisfies the descendent disk condition. Note that none of the three moves $R$-$i^-$ $(i=2,3,5)$ create new double edges and thus no new descendent disks are involved in $D'$. Since $D$ satisfies the descendent disk condition, we can assume that $D'$ also does.          
 
\end{proof}

\begin{lem}\label{2}
Suppose that  
$D$ is transformed into $D'$ by one of the Roseman moves of $R$-$i^-$ and $R$-$i^+$  $(i=1,4)$.   
For any exchangeable union $\Gamma$ of double curves of $D$,   
there is an exchangeable union $\Gamma'$ of double curves of  $D'$ such that 
$D(\Gamma) \sim D'(\Gamma')$.  Moreover, if $\Gamma$ satisfies the descendent disk condition, then 
we may assume that $\Gamma'$ also satisfies it.
\end{lem}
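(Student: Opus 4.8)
The plan is to mirror the proof of Lemma \ref{1}. I would treat each of the four moves $R$-$1^{+}$, $R$-$1^{-}$, $R$-$4^{+}$, $R$-$4^{-}$ in turn; in each case I locate the double curves of $D$ that the move creates, destroys, or merely restricts, and then define $\Gamma'$ by a case split on which of these curves lie in $\Gamma$. Since neither $R$-$1$ nor $R$-$4$ involves a triple point, each of them takes place inside a $3$-ball meeting the diagram in the standard local model shown in Figure 1, and every double curve of $D$ outside a neighbourhood of that ball is untouched; so $\Gamma'$ will always agree with $\Gamma$ there, and its exchangeability in $D'$ will follow from the fact that crossing-changing along $\Gamma'$ and then reversing the local move recovers $D(\Gamma)$.

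For $R$-$1^{+}$, which introduces one new double curve $\gamma_{0}$ in $D'$ and leaves every double curve of $D$ unchanged, I would set $\Gamma'=\Gamma$, performing no crossing change along $\gamma_{0}$. Since $D(\Gamma)$ and $D$ coincide on the relevant ball, the move $R$-$1^{+}$ applies there to $D(\Gamma)$ and produces exactly $D'(\Gamma')$, so $D(\Gamma)\sim D'(\Gamma')$. For $R$-$1^{-}$, let $\gamma_{s}$ be the double curve of $D$ cancelled by the move (restricted to $\gamma'_{s}$, taken to be empty if $\gamma_{s}$ disappears outright), and put
\[
 \Gamma' =
  \begin{cases}
      \Gamma    & \text{if $\gamma_s \nsubseteq \Gamma$,} \\
    \Gamma^{(s)}  \cup  \gamma'_{s} & \text{if $\gamma_s \subset \Gamma$.}
  \end{cases}
\]
Here one must check that the local model of $R$-$1^{-}$ is still available in $D(\Gamma)$ whatever the crossing data along $\gamma_{s}$ may be --- i.e. that the local picture of $R$-$1$ is symmetric under exchanging over/under on the sheet(s) carrying $\gamma_{s}$; granting this, $D(\Gamma)$ and $D'(\Gamma')$ differ by $R$-$1^{-}$. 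The move $R$-$4^{\pm}$ is handled identically: for $R$-$4^{+}$ set $\Gamma'=\Gamma$ (no crossing change on the newly created double curve), and for $R$-$4^{-}$ delete from $\Gamma$, and if necessary replace by its restriction, each double curve of $D$ that the move eliminates, exactly as above.

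It remains to deal with the descendent disk condition, and here is the one genuinely new phenomenon compared with Lemma \ref{1}: $R$-$1^{+}$ and $R$-$4^{+}$ may create new double edges, hence a priori new descendent disks in $D'$. I would argue that any descendent disk $P'$ of $D'$ that is not carried from a descendent disk of $D$ must have \emph{both} double segments of its $R$-$6$ configuration among the newly created double edges: an $R$-$6$ configuration consists of two parallel double segments lying in the intersection of a single pair of disks, and one local $R$-$1$ or $R$-$4$ move cannot contribute just one of them. Since $\Gamma'$ performs no crossing change along any new double curve, property (5) in the definition of a descendent disk is left untouched for such $P'$, so $P'$ is preserved. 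A descendent disk of $D$ that survives into $D'$ has its defining double curves carried to their images under the local move, on which $\Gamma'$ coincides with $\Gamma$; since $\Gamma$ satisfied the descendent disk condition for $D$, property (5) is again preserved. In the $R$-$i^{-}$ directions no new double edges appear, so the condition is inherited exactly as in Lemma \ref{1}.

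The step I expect to be the main obstacle is this last point --- ruling out a ``half-new'' $R$-$6$ configuration (one new defining double segment, one pre-existing) after an $R$-$1$ or $R$-$4$ move, since such a configuration could force property (5) to fail unless the new double curve were added to $\Gamma'$, which might then propagate uncontrollably through the rest of $\Gamma$. Establishing this, together with the over/under-symmetry of the two local models, amounts to a careful inspection of the explicit pictures of $R$-$1$ and $R$-$4$ in Figure 1; the remaining verifications are bookkeeping parallel to Lemma \ref{1}.
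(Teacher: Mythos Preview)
Your scheme matches the paper for the $R\text{-}i^{-}$ directions, but for $R\text{-}1^{+}$ (and by your own analogy $R\text{-}4^{+}$) you diverge from the paper at exactly the point you flag as the main obstacle, and the divergence is a real gap.

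You take $\Gamma'=\Gamma$ after $R\text{-}1^{+}$ and then try to argue that no ``half-new'' descendent disk can arise, so that the descendent disk condition is automatic. The paper does \emph{not} attempt to rule this case out. It explicitly allows that the newly created simple closed curve $\gamma'$ may participate in a descendent disk $P$ whose other bounding segment lies on a pre-existing double curve $\gamma_{s}$, and in that event it sets
\[
 \Gamma' =
  \begin{cases}
      \Gamma & \text{if } \gamma_s \nsubseteq \Gamma, \\
      \Gamma \cup \gamma' & \text{if } \gamma_s \subset \Gamma,
  \end{cases}
\]
absorbing the new curve into $\Gamma'$ precisely when the descendent disk condition would otherwise be violated. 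Because $\gamma'$ is a simple closed curve with no triple points on it, adjoining it keeps $\Gamma'$ exchangeable, and the local $R\text{-}1$ picture is symmetric under swapping over/under along $\gamma'$, so $D'(\Gamma\cup\gamma')\sim D'(\Gamma)\sim D(\Gamma)$; thus the ``uncontrollable propagation'' you worried about does not occur.

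Your proposed justification that a half-new configuration is impossible does not go through. A descendent disk is a global object: the two sheets that meet along the new circle $\gamma'$ may very well meet again along some old double curve outside the $3$-ball of the move, and a descendent disk can span the two. Your sentence ``an $R\text{-}6$ configuration consists of two parallel double segments lying in the intersection of a single pair of disks, and one local $R\text{-}1$ or $R\text{-}4$ move cannot contribute just one of them'' confuses the local model of $R\text{-}6$ with the global location of its bounding segments; the model constrains a neighbourhood of the descendent disk, not where in $D'$ the two segments live. So ``careful inspection of the pictures'' will not close this; follow the paper and enlarge $\Gamma'$ instead.
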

\
\begin{proof}
We prove the moves $R$-$1^+$ and $R$-$1^-$. The moves  $R$-$4^+$ and $R$-$4^-$ are similarly proved. The Roseman move $R$-$1^+$ has the affect of adding a simple closed double curve, denoted by $\gamma'$ , that is independent from the other double curves of the diagram. The resulting double curve might be involved in a descendent disk where the other involved double edge is in $D$. Assume that the new descendent disk created is $P$ and that each of $\gamma'$ and $\gamma_s$ contains a boundary point of $P$, where $\gamma_s$ is a double curve of $D$. We define $\Gamma'$ in this case such that 
\[
 \Gamma' =
  \begin{cases} 
      \Gamma    \hfill & \text{ if $\gamma_s \nsubseteq \Gamma$} \\
    \Gamma \cup \gamma' & \text{ if $\gamma_s \subset \Gamma$} \\
  \end{cases}
\]

If there is no such a descendent disk, we can assume that $\Gamma'=\Gamma$. It is not hard to see that $D(\Gamma) \sim D'(\Gamma')$ in both cases and that $\Gamma'$ satisfies the descendent disk condition by the definition.
The move $R$-$1^-$ is the reverse of $R$-$1^+$. The double edge $\gamma'$ will be eliminated as a result. Denote $\gamma'$ by $\gamma_w$ for this move. Then, $\Gamma'$ of $D'$ that satisfy the assertion of the lemma can be defined such that
\[
 \Gamma' =
  \begin{cases} 
      \Gamma    \hfill & \text{ if $\gamma_w \nsubseteq \Gamma$} \\
   \Gamma^{(w)}  & \text{ if $\gamma_w \subset \Gamma$} \\
  \end{cases}
\]
The lemma follows.
\end{proof}
\begin{lem}\label{3}
Suppose that  
$D$ is transformed into $D'$ by Roseman move of $R$-$6$.   
Let $\Gamma$ be an exchangeable union of double curves of $D$ 
satisfying the descendent disk condition.  
Then there is an exchangeable union $\Gamma'$ of double curves of $D'$ such that 
$D(\Gamma) \sim D'(\Gamma')$ and that $\Gamma'$ also satisfies the descendent disk condition. 
\end{lem}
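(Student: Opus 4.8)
The plan is to decide, using the descendent disk condition, how the two double edges of the $R$-$6$ move lie inside $\Gamma$, and then transport $\Gamma$ across the move. I fix the notation of the $R$-$6$ move as in the excerpt: $P_1$ is the saddle sheet and $P_2$ the transverse one, $e_1$ (from $a_1$ to $a_2$) and $e_2$ (from $b_1$ to $b_2$) are the two double edges forming $P_1\cap P_2$, $a$ and $b$ are their midpoints, and $P$ is the descendent disk of $D$ with $\partial P=l_1\cup l_2$, $l_1\subset P_1$, $l_2\subset P_2$ (it exists because $D\to D'$ is an $R$-$6$ move). Let $\gamma,\gamma^{*}$ be the double curves of $D$ containing $e_1,e_2$ (possibly $\gamma=\gamma^{*}$). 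After the move $e_1,e_2$ are replaced by $e_1'$ (from $a_1$ to $b_2$) and $e_2'$ (from $a_2$ to $b_1$), each obtained by recombining a half of $e_1$ with a half of $e_2$; correspondingly $\gamma\cup\gamma^{*}$ is reorganised into a union $\Gamma_0'$ of one or two double curves of $D'$, while every other double curve, every triple point, and the relative heights of the sheets away from the saddle region are common to $D$ and $D'$.

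The decisive step is a dichotomy. By property (5) of $P$, one of $l_1,l_2$ has both endpoints on $S_a$ and the other both on $S_b$; equivalently, one of $P_1,P_2$ lies above the other consistently along all of $e_1$ and all of $e_2$. If exactly one of $\gamma,\gamma^{*}$ were contained in $\Gamma$, the crossing change along $\Gamma$ would reverse this height order along exactly one of $e_1,e_2$, so that each of $l_1,l_2$ would then have one endpoint on $S_a$ and one on $S_b$; thus $P$ would violate (5) and cease to be a descendent disk of $D(\Gamma)$, contradicting the descendent disk condition. Hence $\gamma$ and $\gamma^{*}$ lie in $\Gamma$ together or not at all (trivially so if $\gamma=\gamma^{*}$), and I set
\[
 \Gamma' =
  \begin{cases}
      \Gamma & \text{if } \gamma \nsubseteq \Gamma,\\
      (\Gamma \setminus (\gamma \cup \gamma^{*})) \cup \Gamma_0' & \text{if } \gamma, \gamma^{*} \subset \Gamma,
  \end{cases}
\]
where in the first line $\Gamma$ is reinterpreted as a union of double curves of $D'$.

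It then remains to verify three points, each of which I expect to be routine once the above is set up. First, $\Gamma'$ is exchangeable: at every triple point (all shared by $D$ and $D'$) the set of branches lying in $\Gamma'$ is the image under the move of the set lying in $\Gamma$, and the move changes neither the bottom/middle/top type of a branch nor, by the dichotomy, whether it belongs to the family, so the admissible local configurations around a triple point are preserved. Second, $D(\Gamma)\sim D'(\Gamma')$: by the dichotomy the crossing change along $\Gamma$ preserves $P$, so the $R$-$6$ move applies to $D(\Gamma)$; since it is realised by an isotopy that only reconnects $e_1,e_2$ into $e_1',e_2'$ and alters no other over/under relation, the outcome is exactly $D'(\Gamma')$ by the choice of $\Gamma'$. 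Third, $\Gamma'$ satisfies the descendent disk condition: a descendent disk of $D'$ is either inherited from $D$ --- hence preserved, since $\Gamma'$ agrees with $\Gamma$ off the saddle region and $\Gamma$ preserves it --- or it joins the midpoints of $e_1'$ and $e_2'$ by arcs on $P_1$ and $P_2$ (in particular the disk $\bar P$ dual to $P$); for such a disk one checks (5) exactly as for $P$, using that $P_1,P_2$ keep a common height order along both $e_1'$ and $e_2'$ (each built from halves of $e_1$ and $e_2$) and that $\Gamma'$ contains the curves through $e_1'$ and $e_2'$ together or not at all.

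The main obstacle is the dichotomy and its two uses: forcing $\gamma$ and $\gamma^{*}$ into $\Gamma$ together by a careful reading of property (5) of $P$, and then verifying the analogous property of $\Gamma'$ at the dual disk $\bar P$ and at any newly created descendent disk of $D'$ --- together with the small amount of bookkeeping needed when $\gamma=\gamma^{*}$ and the reorganisation changes the number of double curves. Everything else (exchangeability, and the commutation of the crossing change with the local $R$-$6$ move) is straightforward after the height content of (5) has been pinned down.
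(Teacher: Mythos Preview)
Your proposal is correct and follows essentially the same approach as the paper: use the descendent disk condition on $P$ to force the dichotomy that $\gamma$ and $\gamma^{*}$ lie in $\Gamma$ together or not at all, then define $\Gamma'$ by the same two-case formula and observe that the same $R$-$6$ move carries $D(\Gamma)$ to $D'(\Gamma')$. You are in fact more thorough than the paper, which asserts the dichotomy without your explicit property-(5) argument and does not spell out the verification that $\Gamma'$ satisfies the descendent disk condition for $D'$.
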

\begin{proof}
Let $P$ be a descendent disk of $D$ and suppose $R$-$6$ move is applied along $P$ to obtain the surface-knot diagram $D'$. Let $e_{1}$ and $e_{2}$ be double edges of $D$ each of which contains a boundary point of $P$. Assume that $e_1 \subset \gamma_s$ and $e_2 \subset \gamma_w$, where $\gamma_s$ and $\gamma_w$ are double curves of $D$. Since $\Gamma$ satisfies the descendent disk condition, either the upper/lower information of both $\gamma_s$ and $\gamma_w$ are exchanged or neither. In the latter case, the result follows by letting $\Gamma'=\Gamma$. On the other hand,  let $\gamma_s$ and $\gamma_w$ be subsets of $\Gamma$. Apply $R$-$6$ to obtain the surface-knot diagram $D'$. The connection between the double edges $e_1$ and $e_2$ is changed so that we obtain new double edges, say $e_1'$ and $e_2'$ in $D'$.  Assume that $e_1'$ and $e_2'$ are subsets of double curves $\gamma'_s$ and $\gamma'_w$ of $D'$, respectively. Exchanging the upper/lower information of
\[
\Gamma'= \Gamma^{(s,w)} \cup  \gamma'_s \cup \gamma'_w
\]
in $D'$ gives a surface-knot diagram equivalent to $D(\Gamma)$.
\end{proof}

\begin{thm}\label{th}
Let $F$ be a $du$-exchangeable surface-knot and $D$ be a $du$-exchangeable surface-knot diagram of $F$. Assume that $D=D_0\rightarrow D_1\rightarrow \dotso \rightarrow D_n=D'$ is a t-descendent sequence. Then, for each $i=1,\dotso,n$, $D_{i}$ is $du$-exchangeable.  
\end{thm}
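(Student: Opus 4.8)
The plan is to argue by induction on $i$, using Lemmas \ref{1}, \ref{2} and \ref{3} to carry the exchangeable union of double curves witnessing $du$-exchangeability across one Roseman move at a time. For $i=0$ there is nothing to prove, since $D_0=D$ is $du$-exchangeable by hypothesis. For the inductive step, assume $D_i$ is $du$-exchangeable and deduce the same for $D_{i+1}$.

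First treat the trivial case: if $D_i$ presents an unknotted surface-knot, then, since $D_{i+1}$ is obtained from $D_i$ by a single Roseman move and such a move preserves the equivalence class of the surface-knot, $D_{i+1}$ also presents an unknotted surface-knot, hence is $du$-exchangeable by definition. In the remaining case, $D_i$ carries an exchangeable union of double curves $\Gamma_i$ that satisfies both the descendent disk condition and the unknotting condition for $D_i$. By condition (*), the transition $D_i\to D_{i+1}$ is realized by one of the moves $R$-$j^-$ with $j\in\{2,3,5\}$, by $R$-$j^{\pm}$ with $j\in\{1,4\}$, or by $R$-$6$; these are precisely the moves covered by Lemmas \ref{1}, \ref{2} and \ref{3}, respectively. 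Applying the appropriate lemma to $\Gamma_i$ produces an exchangeable union $\Gamma_{i+1}$ of double curves of $D_{i+1}$ with $D_i(\Gamma_i)\sim D_{i+1}(\Gamma_{i+1})$ and with $\Gamma_{i+1}$ again satisfying the descendent disk condition. Since $D_i(\Gamma_i)$ presents an unknotted surface-knot and $D_{i+1}(\Gamma_{i+1})$ presents an equivalent surface-knot, $\Gamma_{i+1}$ satisfies the unknotting condition for $D_{i+1}$. Hence $D_{i+1}$ is $du$-exchangeable, and the induction closes, giving the conclusion for every $i=1,\dots,n$.

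The only real point requiring care is the case analysis itself: one must check that condition (*) restricts the transitions to exactly the moves handled by the three lemmas, so that no forbidden move $R$-$j^+$ with $j\in\{2,3,5\}$ can occur, and one must verify that a valid input is supplied to each lemma at every step. In particular Lemma \ref{3} takes the descendent disk condition as a hypothesis, so it is essential that the ``moreover'' clauses of Lemmas \ref{1} and \ref{2} and the conclusion of Lemma \ref{3} propagate that condition forward along the whole sequence, not merely at its first step. Since all of this is already built into the statements of the three lemmas, the theorem is essentially a formal consequence of them together with the definition of $du$-exchangeability and the invariance of the surface-knot under Roseman moves, and I do not expect a genuinely new obstacle.
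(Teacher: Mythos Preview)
Your argument is correct and follows essentially the same approach as the paper: reduce to a single Roseman move in the $t$-descendent sequence and invoke Lemmas~\ref{1}, \ref{2}, \ref{3}. In fact your version is somewhat more explicit than the paper's, which compresses the whole proof into that single reduction step; your care with the inductive propagation of the descendent disk condition (needed as a hypothesis for Lemma~\ref{3}) and with the trivial unknotted case is appropriate and not spelled out in the original.
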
 
\begin{proof}
Without loss of generality, we can assume that $D'$ is obtained from $D$ by applying a single Roseman move of one of the possible types in a $t$-descendent sequence. The theorem then follows from Lemma \ref{1}, Lemma \ref{2} and Lemma \ref{3}.
\end{proof}
In the section that follows, an invariant for $du$-exchangeable surface-knots is defined.
\section{The $du$-exchange index }

Let $F$ be a $du$-exchangeable surface-knot. The $du$-exchange index $du(F)$ is defined as follows.
\begin{defn}
The \textit{$du$-exchange index $du(F)$} of a $du$-exchangeable surface-knot $F$ is the minimum number of double curves required, taken over all $du$-exchangeable diagrams representing
$F$, to convert $F$ into a trivial surface-knot.
\end{defn}
The $du$-exchange index is an invariant for $du$-exchangeable surface-knots.
\begin{exmp}
S. Satoh and A. Shima in \cite{SS} gave an estimate for a lower bound of the triple point number for tri-colourable surface-knots. Using this estimate, they showed that the 2-twist spun trefoil has the triple point number four. By following Satoh's construction of diagrams of twist-spun knots \cite{Satoh}, we obtain a surface-knot diagram of the 2-twist spun trefoil with twelve triple points. In particular, Satoh's diagram can be deformed into a t-minimal one with four triple points by a t-descendent sequence involving Roseman moves $R$-$2^-$, $R$-$5^-$ and $R$-$6$. Figure 4 depicts a schematic picture of the double curves of a t-minimal diagram of the $2$-twist spun trefoil showing the types of double branches at each triple point. 
\begin{figure}[H]
\centering
\captionsetup{font=scriptsize}      
\mbox{\includegraphics[scale=0.6]{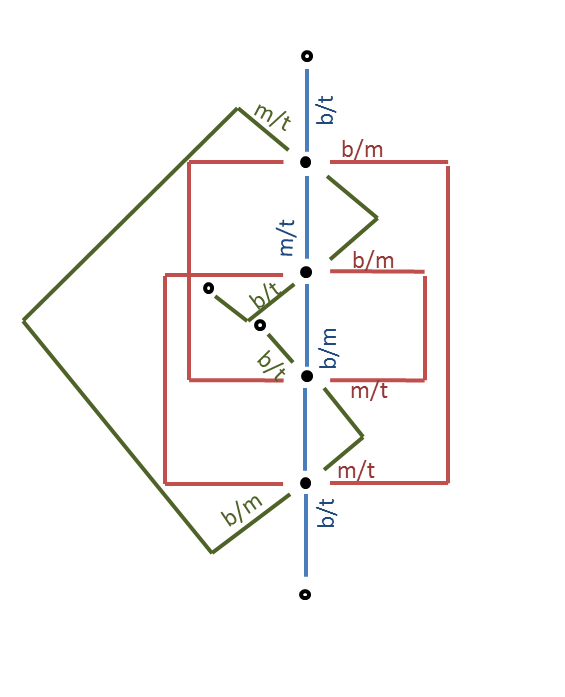}}
 \caption{Schematic picture of the double curves of a t-minimal diagram of the 2-twist spun trefoil}
\label{Fig(1)}
\end{figure} 
From the figure, we see that there are three double curves and two of them are open. Note that exchanging the crossing information of the closed double curve gives a trivial 2-knot diagram and this shows that the 2-twist spun trefoil has the $du$-exchange index equal to one.
\end{exmp}
\begin{Ach*}
We would like to express our deep sense of gratitude to Prof. Seiichi Kamada for the review of first draft of the manuscript.  We offer our most sincere appreciation and gratitude for all of his supportive comments and extensive	
efforts	for	the	thorough analysis of the manuscript. Thanks are also extended to Sultan Qaboos University for providing the funding which allowed me to undertake this research.
\end{Ach*}

Amal Al Kharusi \\
 Department of Mathematics and Statistics, College of Science, Sultan Qaboos University, Oman \\
 Email address: amalalkharusi2@gmail.com
 \\
 \\
 Tsukasa Yashiro \\
 Department of Mathematics and Statistics, College of Science, Sultan Qaboos University, Oman \\
Email address: yashiro@squ.edu.om

\begin{thebibliography}{9}
\bibitem{generic} J. S. Carter and M. Saito, Knotted surfaces and their diagrams, Mathematical Surveys and Monographs, vol. 55, Amer. Math. Soc., Providence, RI, 1998. 

\bibitem{lift} J. S. Carter and M. Saito, Surfaces in 3-space that do not lift to embeddings in 4-space, Knot Theory, Banach Center Proceedings, 42Polish Academy of Science, Warsaw, 42(1998), 29-47.


\bibitem{Homma} T. Homma, Elementary deformations on homotopy 3-spheres, Topology and Computer Science, Kinokuniya, Tokyo, (1987), 21-27. 

\bibitem{Kamada}
S. Kamada, Crossing changes for singular 2-dimensional braids without branch points, Kobe J. of Mathematics, 13(1996), 177-182.

\bibitem{kaw} K. Kawamura, On relationship between seven types of Roseman moves, Topology and its Applications(2015), http//dx.doi.org/10.1016/j.topol.2015.05.033.
\bibitem{p-}
M. Jabłonowski, Knotted surfaces and equivalencies of their diagrams without triple points, J. Knot Theory Ramifications, 21(2012), pp. 6. 
\bibitem{Homfley}
R. Morton and G. Lukac, HOMFLY polynomial of decorated Hopf link, Journal of Knot Theory and Its Ramifications, 12(2003), 395-416.

\bibitem{Roseman}
D. Roseman, Reidemeister-type moves for surfaces in four dimensional space, Banach Center Publication, {42}(1998), 347-380.

\bibitem{Satoh} 
S. Satoh, Surface diagrams of twist-spun 2-knots, J. Knot Theory Ramifications, 11(2002), 413-430.

\bibitem{SS}
S. Satoh and A. Shima, The 2-twist-spun trefoil has the triple point number four, Trans, Amer. Math. Soc., {356}(2004), 1007-1024.

\bibitem{Tanaka}
K. Tanaka, Crossing changes for pseudo-ribbon surface-knots, Osaka J. Math, {41}(2004), 877-890.

\bibitem{Yashiro1}
T. Yashiro, A note on Roseman moves, Kobe J. Mathematics, {22}(2005), 31-38.

\end{thebibliography}
\end{document}